\newtheorem{thm}{Theorem}[section]
\newtheorem{lem}{Lemma}[section]
\newtheorem{cor}{Corollary}[section]
\newtheorem{prop}{Proposition}[section]
\newtheorem{defn}{Definition}[section]
\numberwithin{equation}{section}
\begin{document}

\begin{center}
{\large{\textbf{CHARACTERISTICS OF CONFORMAL RICCI SOLITON ON WARPED PRODUCT SPACES}}}
\end{center}
\vspace{0.1 cm}
\begin{center}By\end{center}
\begin{center}
{Dipen~~Ganguly\footnote{The first author D. Ganguly is the corresponding author and is thankful to the National Board for Higher Mathematics (NBHM), India(Ref No: 0203/11/2017/RD-II/10440) for their financial support to carry on this research work.},Nirabhra~~Basu$^2$ and Arindam~~Bhattacharyya$^3$}
\end{center}
\vskip 0.3cm
\begin{center}
$^{1,3}$Department~of~Mathematics\\
Jadavpur~University,\\
Kolkata-700032,~India.\\
E-mail: dipenganguly1@gmail.com\\
E-mail: bhattachar1968@yahoo.co.in\\
$^2$Department~of~Mathematics\\
Bhawanipur~Education~Society~College,\\
Kolkata-700020,~India.\\
E-mail: nirabhra.basu@thebges.edu.in\\
\end{center}
\vskip 0.3cm

\begin{center}
\textbf{Abstract}\end{center}\par
\medskip
Conformal Ricci solitons are self similar solutions of the conformal Ricci flow equation. This paper deals with the study of conformal Ricci solitons within the framework of warped product manifolds which extends the notion of usual Riemannian product manifolds. First, we prove that if a warped product manifold admits conformal Ricci soliton then the base and the fiber also share the same property. In the next section the characterization of conformal Ricci solitons on warped product manifolds in terms of Killing and conformal vector fields has been studied. Next, we prove that a warped product manifold admitting conformal Ricci soliton with concurrent potential vector field is Ricci flat. Finally, an application of conformal Ricci soliton on a class of warped product spacetimes namely, generalized Robertson-Walker spacetimes has been discussed.\par
\medskip
\begin{flushleft}
\textbf{Key words :} Ricci flow, Ricci soliton, conformal Ricci flow, conformal Ricci soliton, Warped product, Spacetimes, Killing fields, concurrent vector fields.\par
\end{flushleft}
\medskip
\begin{flushleft}
\textbf{2010 Mathematics Subject Classification :} 53C15, 53C25, 53C44. \par
\end{flushleft}
\medskip
\medskip
\medskip
\section{\textbf{Introduction}}
Hamilton's \cite{[3]},\cite{[4]} theory of Ricci flow reached to a highest magnitude and popularity after G.Perelman \cite{[1]},\cite{[2]} successfully applied it in solving the Poincar\'{e} conjecture.  The study of Ricci solitons was also introduced by Hamilton as fixed or stationary points of the Ricci flow in the space of the parameterized metrics ${g(t)}$ on $M$ modulo diffeomorphisms and scaling. Since then, both the topics have been studied by many mathematicians like Brendle \cite{[13]}, H. D. Cao \cite{[14]}, B.Y. Chen \cite{[9]} etc. and many others \cite{[5]},\cite{[15]},\cite{[21]},\cite{[22]}.
\par
\medskip
A smooth manifold $M$ equipped with a Riemannian metric $g$ is said to be a Ricci soliton, if for some constant $\lambda$, there exist a smooth vector field $X$ on $M$ satisfying the equation
\begin{equation}
Ric+\frac{1}{2}\mathcal{L}_{X}g=\lambda g,\nonumber
\end{equation}
where $\mathcal{L}_{X}$ denotes the Lie derivative and $Ric$ is the Ricci tensor. The Ricci soliton is called shrinking if $\lambda >0$, steady if $\lambda =0$ and expanding if $\lambda <0$.\par
\medskip
 A. E. Fischer \cite{[6]} has introduced conformal Ricci flow in 2004 as a modified Ricci flow.  It preserves the constant scalar curvature of the evolving metrics. Because of the role of conformal geometry plays in maintaing scalar curvature constant such a modified Ricci flow was named as conformal Ricci flow.\par
\medskip
 The conformal Ricci flow equation on a smooth closed connected oriented n-manifold, $n\geq 3$, is given by
\begin{equation}
\frac{\partial g}{\partial t}+2(Ric+\frac{g}{n})=-pg~~~and~~~r(g)=-1, \nonumber
\end{equation}
where $p$ is a non-dynamical(time dependent) scalar field and $r(g)$ is the scalar curvature of the manifold.The term $-pg$ acts as the constraint force to maintain the scalar curvature constraint. Thus these evolution equations are analogous to famous Navier-Stokes equations where the constraint is divergence free. For this reason $p$ is also called the conformal pressure.\par
\medskip
In 2015, Basu and Bhattacharyya \cite{[7]} introduced the notion of conformal Ricci soliton equation as
\begin{equation}
\mathcal{L}_{X}g+2Ric=[2\lambda -(p+\frac{2}{n})]g,
\end{equation}
where $\lambda$ is constant and $p$ is the conformal pressure. The equation is the generalization of the Ricci soliton equation and it satisfies the conformal Ricci flow equation.\par
\medskip
After N. Basu's paper \cite{[7]}, many authors have studied conformal Ricci solitons and obtained interesting results. D. Ganguly and A. Bhattacharyya studied the conformal Ricci soliton within the framework of almost co-K\"{a}hler manifolds \cite{DG1} and $(LCS)_n$-manifolds \cite{DG2} and obtained some interesting results. T. Dutta et. al. in \cite{[20]} showed that if a Lorentzian $\alpha$-Sasakian manifold admits conformal Ricci soliton and is Weyl conformally semi-symmetric, then the manifold is $\eta$-Einstein. Again in \cite{[18]} it has been proved that a $(k,\mu)^{'}$-almost Kenmotsu manifold admitting conformal Ricci soliton is either locally isometric to $\mathbb{H}^{n+1}(-4)\times\mathbb{R}^n$ or the soliton is expanding, steady or shrinking according to some conditions on the conformal pressure $p$. S.K. Hui et. al. \cite{[19]}  proved that a conformal Ricci soliton on a three-dimensional $f$-Kenmotsu manifold with torqued potential vector field is an almost quasi-Einstein manifold. In \cite{MDS} M. D. Siddiqui et al., established that a conformal Ricci soliton, on a perfect fluid spacetime with torse-forming vector field and without cosmological constant, is expanding. So, motivated by the above studies, in this paper we investigate the nature of conformal Ricci soliton on warped product spaces.\par
\medskip
The concept of warped product was introduced by Bishop and O'Neill \cite{[8]}. They have given the examples of complete Riemannian manifolds with negative sectional curvature. After that the study of warped product have been of great interest among both mathematicians and physicists. Considering two Riemannian manifolds $(B,g_B)$ and $(F,g_F)$ as well as a positive smooth function $f$ on $B$,we define on the product manifold $B\times F$, the metric
\begin{equation}
g= \pi^*g_B+(f\circ\pi)^2\sigma^*g_F,
\end{equation}
where $\pi$ and $\sigma$ are the natural projections on $B$ and $F$ respectively. Under this conditions the product manifold is said to be the warped product of $B$ and $F$; it is denoted by $M=B\times_f F$. Here the manifold $B$ is called the base manifold and $F$ is called the fiber. The function $f$ is called the warping function.\par
\medskip
In our study, we shall focus on the conditions that makes the warped product to a conformal Ricci soliton.
To begin with, let us first recall a very important result (for details see \cite{[8]}) which will be required for our purpose in later sections.\par
\medskip
\begin{lem}
  Let $(M,g)=(B\times_fF,g_B\oplus f^2g_F)$ be an warped product of two Riemannian manifolds $(B,g_B)$ and $(F,g_F)$ with dim$B=m$ and dim$F=n$. Then for all $X,Y\in\mathfrak{X}(B)$ and $U,V\in\mathfrak{X}(F)$
\begin{enumerate}
    \item $D_XU=D_UX=\frac{X(f)}{f}U$,
    \item $Ric(X,U)=0$,
    \item $Ric(X,Y)=Ric^B(X,Y)-\frac{n}{f}H^f(X,Y)$,
    \item $Ric(U,V)=Ric^F(U,V)-(\frac{\Delta f}{f}+(n-1)\frac{\|\nabla f\|^2}{f^2})g(U,V)$,
\end{enumerate}
  where $D_XY$ is the lift of $\nabla_XY$ on $B$ and $Ric^B$, $Ric^F$ are the lifts of the Ricci tensors on the base $B$ and the fiber $F$ respectively.
\end{lem}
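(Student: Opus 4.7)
The plan is to proceed exactly along the classical Bishop--O'Neill route, deriving all four statements from a single computation of the Levi--Civita connection on the warped product.

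First I would establish item (1) using the Koszul formula. The key observations are that horizontal lifts $X,Y$ of base vector fields and vertical lifts $U,V$ of fiber vector fields have vanishing mixed bracket $[X,U]=0$, and that the warped metric satisfies $g(X,U)=0$, $g(X,Y)=(\pi^*g_B)(X,Y)$ and $g(U,V)=(f\circ\pi)^2(\sigma^*g_F)(U,V)$. Plugging these into
\[
2g(D_XU,Z)=X\,g(U,Z)+U\,g(Z,X)-Z\,g(X,U)+g([X,U],Z)-g([U,Z],X)+g([Z,X],U)
\]
with $Z$ chosen once horizontal and once vertical should collapse almost all terms; only the derivative $X(f^2)=2fX(f)$ survives, yielding $D_XU=\frac{X(f)}{f}U$. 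Symmetry $D_XU=D_UX$ follows at once because $[X,U]=0$.

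For items (2)--(4) I would compute the curvature tensor $R$ using $D_XU=\frac{X(f)}{f}U$ together with the fact that $D$ restricted to purely horizontal (resp.\ purely vertical) triples of vectors agrees with the lift of the base (resp.\ fiber) connection up to fiber-dependent correction terms that arise only through the warping function. Specifically, for horizontal $X,Y$ and vertical $U$, a direct expansion of $R(X,U)Y = D_XD_UY-D_UD_XY-D_{[X,U]}Y$ gives
\[
R(X,U)Y = -\frac{H^f(X,Y)}{f}\,U,
\]
where $H^f$ is the Hessian of $f$ lifted to $M$. Tracing this expression over a local orthonormal frame of the fiber produces the mixed components and verifies (2) (the vertical trace of a purely vertical vector gives zero when paired against $X$ via $g$, since $g(X,U)=0$). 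A parallel computation of $R(U,X)V$ for vertical $U,V$ and horizontal $X$ yields the contribution responsible for the $-\frac{n}{f}H^f(X,Y)$ term in (3), and the Ricci tensor of the base enters as the horizontal trace.

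For item (4), the curvature components needed are $R(U,V)W$ for three vertical vectors and $R(X,U)V$ for two vertical and one horizontal vector. The former, after using the Gauss-type formula that relates the warped connection to the fiber connection, produces $Ric^F(U,V)$ together with a $\|\nabla f\|^2$ correction, while tracing the latter over a horizontal frame gives the $\frac{\Delta f}{f}$ term. Combining and simplifying, using that the metric on vertical vectors carries a factor $f^2$, produces the stated coefficient $\frac{\Delta f}{f}+(n-1)\frac{\|\nabla f\|^2}{f^2}$.

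The main obstacle is purely bookkeeping: one must carefully distinguish between a function or tensor on $B$ or $F$ and its lift to $M$, and between the base norm $\|\nabla f\|^2$ (a function on $B$) and the warped norm on $M$. The conceptual content is entirely contained in item (1); items (2)--(4) are then forced by tracing the curvature identities in the right frames, so the work is to organize the indices so that the Hessian, Laplacian and $\|\nabla f\|^2$ terms appear with the correct powers of $f$.
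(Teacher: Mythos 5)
Your outline is the standard O'Neill/Bishop--O'Neill derivation (Koszul formula for the connection, then the warped-product curvature identities traced in adapted frames), and the formulas you quote along the way are correct up to sign convention. The paper itself gives no proof of this lemma --- it simply recalls the result from Bishop and O'Neill \cite{[8]} --- so your proposal supplies exactly the classical argument the citation points to, and there is no substantive divergence to report.
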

\medskip
\medskip
\section{\textbf{Conformal Ricci soliton on warped product manifolds}}
This section deals with the investigations of conformal Ricci soliton on warped product manifolds. Basically here we want to study if a warped product manifold admits a conformal Ricci soliton then how its effect is on the base manifold and on the fiber i.e; we try to find out under which conditions they become conformal Ricci soliton. So, for this purpose let us first suppose that $(M,g)=(B\times_fF,g_B\oplus f^2g_F)$ be an warped product of two Riemannian manifolds $(B,g_B)$ and $(F,g_F)$ with dim$B=m$ and dim$F=n$. Now let $(M,g,\mu,\xi)$ be a conformal Ricci soliton, where $\mu=[2\lambda -(p+\frac{2}{n})]$. Then from equation $(1.1)$ we get
\begin{equation}
\mathcal{L}_{\xi}g+2Ric=[2\lambda -(p+\frac{2}{n})]g=\mu g
\end{equation}
where $\mu=[2\lambda -(p+\frac{2}{n})]$.
Again note that from the lemma $1.1$ of the previous section (for details see \cite{[8]}) the following two well-known formulas for warped product manifolds can be easily deduced
\begin{eqnarray}
% \nonumber % Remove numbering (before each equation)
  \mathcal{L}_{\xi}g &=& {\mathcal{L}}^B_{\xi_B}g_B+f^2{\mathcal{L}}^F_{\xi_F}g_F+2f\xi_B(f)g_F, \\
  Ric &=& Ric^B-\frac{n}{f}H^f+Ric^F-\tilde{f}g_F,
\end{eqnarray}
where $\tilde{f}=f\Delta f+(n-1)\|\nabla f\|^2_B$.
Now in equation $(2.1)$ using the definition of the warped metric from equation $(1.2)$ and then applying the values from the above two equations $(2.2)$ and $(2.3)$ we have
\begin{eqnarray}
% \nonumber % Remove numbering (before each equation)
  \mu (g_B+f^2g_F) &=& \mu g \nonumber\\
   &=& \mathcal{L}_{\xi}g+2Ric \nonumber\\
   &=& {\mathcal{L}}^B_{\xi_B}g_B+f^2{\mathcal{L}}^F_{\xi_F}g_F+2f\xi_B(f)g_F+2Ric^B \nonumber\\
   && -2\frac{n}{f}H^f+2Ric^F-2\tilde{f}g_F,
\end{eqnarray}
\par
\medskip
Again for all $U,V\in\mathfrak{X}(B)$, using the definition of Lie derivation we can write
\begin{equation}
  ({\mathcal{L}}^B_{\xi_B}g_B)(U,V)=g_B(D^B_U\xi_B,V)+g_B(U,D^B_V\xi_B).
\end{equation}
Now from the definition of Hessian and the above equation $(2.5)$ we have
\begin{equation}
  ({\mathcal{L}}^B_{\xi_B}g_B-2\frac{n}{f}H^f)(U,V)=g_B(D^B_U\xi_B,V)+g_B(U,D^B_V\xi_B)-2\frac{n}{f}g_B(D^B_U\nabla^Bf,V).\nonumber
\end{equation}
The above equation can be rewritten as
\begin{eqnarray}
% \nonumber % Remove numbering (before each equation)
  ({\mathcal{L}}^B_{\xi_B}g_B-2\frac{n}{f}H^f)(U,V) &=& (g_B(D^B_U\xi_B,V)-\frac{n}{f}g_B(D^B_U\nabla^Bf,V)) \nonumber\\
                                                    && +(g_B(U,D^B_V\xi_B)-\frac{n}{f}g_B(D^B_U\nabla^Bf,V)) \nonumber\\
                                                    &=& g_B(D^B_U(\xi_B-n\nabla ^B\ln f),V) \nonumber\\
                                                    && +g_B(U,D^B_V(\xi_B-n\nabla ^B\ln f)).
\end{eqnarray}
Using the definition of Lie derivative again equation $(2.6)$ becomes
\begin{equation}
  ({\mathcal{L}}^B_{\xi_B}g_B-2\frac{n}{f}H^f)(U,V)=({\mathcal{L}}^B_{\xi_B-n\nabla ^B\ln f}g_B)(U,V), \forall U,V\in\mathfrak{X}(B).\nonumber
\end{equation}
Since the above equation is true for all $U,V\in\mathfrak{X}(B)$, in operator notation we can write
\begin{equation}
{\mathcal{L}}^B_{\xi_B}g_B-2\frac{n}{f}H^f={\mathcal{L}}^B_{\xi_B-n\nabla ^B\ln f}g_B
\end{equation}
Now using the value from equation $(2.7)$, the equation $(2.4)$ finally yields
\begin{multline}
  ({\mathcal{L}}^B_{\xi_B-n\nabla ^B\ln f}g_B+2Ric^B)+(f^2{\mathcal{L}}^F_{\xi_F}g_F+2Ric^F)\\=\mu g_B
    +(\mu f^2-2f\xi_B(f)+2\tilde{f})g_F).
\end{multline}
Hence from the above discussion and equation $(2.8)$ we have the following theorem\par
\medskip
\begin{thm}
  Let us consider that $(M,g)=(B\times_fF,g_B\oplus f^2g_F)$ be an warped product of two Riemannian manifolds $(B,g_B)$ and $(F,g_F)$ with warping function $f$, dim$B=m$ and dim$F=n$. If $(M,g,\mu,\xi)$ be a conformal Ricci soliton, then the base $(B,g_B,\mu,\xi_B-n\nabla ^B\ln f)$ and the fiber $(F,g_F,\mu f^2-2f\xi_B(f)+2\tilde{f},f^2\xi_F)$ are both conformal Ricci solitons; where $\tilde{f}=f\Delta f+(n-1)\|\nabla f\|^2_B$, $\mu=[2\lambda -(p+\frac{2}{n})]$, $\lambda$ is the soliton constant and $p$ is the conformal pressure.
\end{thm}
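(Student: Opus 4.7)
The derivation through equation $(2.8)$ has already accomplished most of the work, so my plan is to extract the two soliton equations by splitting $(2.8)$ into its horizontal and vertical components. The key observation is that $g_B$ and $Ric^B$ are supported on $\mathfrak{X}(B)\times\mathfrak{X}(B)$, while $g_F$ and $Ric^F$ are supported on $\mathfrak{X}(F)\times\mathfrak{X}(F)$, and the mixed Ricci components vanish by Lemma $1.1(2)$. Therefore evaluating $(2.8)$ separately on horizontal pairs and on vertical pairs should decouple it cleanly.

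First I would test $(2.8)$ on an arbitrary pair $U,V\in\mathfrak{X}(B)$. The fiber terms $f^2\mathcal{L}^F_{\xi_F}g_F$, $2Ric^F$, and the $g_F$-terms on the right all vanish on such vectors, leaving
\begin{equation}
\mathcal{L}^B_{\xi_B-n\nabla^B\ln f}\,g_B+2\,Ric^B=\mu\,g_B,\nonumber
\end{equation}
which is exactly the conformal Ricci soliton equation on $(B,g_B)$ with potential $\xi_B-n\nabla^B\ln f$ and soliton constant $\mu$.

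Next I would evaluate $(2.8)$ on $U,V\in\mathfrak{X}(F)$. The base tensors drop out and I get
\begin{equation}
f^2\mathcal{L}^F_{\xi_F}\,g_F+2\,Ric^F=\bigl(\mu f^2-2f\xi_B(f)+2\tilde{f}\bigr)g_F.\nonumber
\end{equation}
To recast this in standard soliton form I would use that $f$ is constant along each fiber, so $U(f)=V(f)=0$ for $U,V\in\mathfrak{X}(F)$. By the Leibniz rule for the Lie derivative this gives $(\mathcal{L}^F_{f^2\xi_F}g_F)(U,V)=f^2(\mathcal{L}^F_{\xi_F}g_F)(U,V)$, because the extra terms in $\mathcal{L}_{f^2\xi_F}g_F$ involve $U(f^2)$ and $V(f^2)$. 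Treating the base point as a parameter, so that the scalar $\mu f^2-2f\xi_B(f)+2\tilde{f}$ is constant on each fiber, this presents $(F,g_F)$ as a conformal Ricci soliton with potential $f^2\xi_F$.

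The only subtle point is the passage from $f^2\mathcal{L}^F_{\xi_F}g_F$ to $\mathcal{L}^F_{f^2\xi_F}g_F$, together with the mild abuse of terminology in letting the ``soliton constant'' on the fiber depend on the base point; both are standard in the warped-product literature. Beyond this bookkeeping, the theorem is just the horizontal/vertical decomposition of the identity $(2.8)$, and no further computation is required.
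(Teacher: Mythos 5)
Your proposal is correct and follows essentially the same route as the paper: the paper's proof consists precisely of the derivation culminating in equation $(2.8)$, from which the theorem is read off, and your horizontal/vertical evaluation simply makes that final reading-off explicit. Your added remark justifying the identification $f^{2}\mathcal{L}^{F}_{\xi_F}g_F=\mathcal{L}^{F}_{f^{2}\xi_F}g_F$ on each fiber (since $f$ is constant there) is a point the paper leaves implicit, but it does not change the argument.
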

\par
\medskip
\medskip
Now we study a special case when the soliton vector field $\xi$ of the conformal Ricci soliton $(M,g,\mu,\xi)$ becomes gradient of some smooth function $\phi$ i.e; when $\xi=grad\phi=\nabla\phi$. In this case we call the soliton a conformal gradient Ricci soliton and the function $\phi$ is then called the potential function of the soliton. Also for notational purpose without any confusion we denote a conformal gradient Ricci soliton as $(M,g,\mu,\phi)$, where the last term specifies the potential function of the soliton. \par
\medskip
Let us assume that $(M,g)=(B\times_fF,g_B\oplus f^2g_F)$ be an warped product of two Riemannian manifolds $(B,g_B)$ and $(F,g_F)$ with dim$B=m$ and dim$F=n$. Then if $(M,g,\mu,\phi)$ be a conformal gradient Ricci soliton, for any vector fields $X,Y\in\mathfrak{X}(M)$, equation $(1.1)$ implies
\begin{equation}
  2H^{\phi}(X,Y)+2Ric(X,Y)=[2\lambda -(p+\frac{2}{n})]g(X,Y)=\mu g(X,Y).
\end{equation}
\par
\medskip
 Now if we take $X=X_B$ and $Y=Y_B$, where $X_B,Y_B$ are the lifts of the vector fields $X,Y$ in $\mathfrak{X}(B)$, then the equation $(2.9)$ gives us
\begin{equation}
   2H^{\phi}(X_B,Y_B)+2Ric(X_B,Y_B)=\mu g(X_B,Y_B).\nonumber
\end{equation}
Using the value of the Ricci tensor for the base manifold from lemma $1.1$, the above equation becomes
\begin{equation}
   2H^{\phi _B}_B(X_B,Y_B)+2Ric^B(X_B,Y_B)-2\frac{n}{f}H^f_B(X_B,Y_B)=\mu g_B(X_B,Y_B),\nonumber
\end{equation}
where $\phi_B=\phi$ at a fixed point of the fiber $F$. Finally using the properties of Hessian in the above equation we get
\begin{equation}
   2H^{\phi _B-n\ln f}_B(X_B,Y_B)+2Ric^B(X_B,Y_B)=\mu g_B(X_B,Y_B).
\end{equation}
This shows that $(B,g_B,\mu,\phi _B-n\ln f)$ is a conformal gradient Ricci soliton.\par
\medskip
Again taking $X=X_F$ and $Y=Y_F$, where $X_F,Y_F$ are the lifts of the vector fields $X,Y$ in $\mathfrak{X}(F)$, then the equation $(2.9)$ gives us
\begin{equation}
   2H^{\phi}(X_F,Y_F)+2Ric(X_F,Y_F)=\mu g(X_F,Y_F).\nonumber
\end{equation}
Using equation $(2.3)$ and lemma $1.1$ the above equation becomes
\begin{equation}
   2H^{\phi _F}_F(X_F,Y_F)+2Ric^F(X_F,Y_F)-\tilde{f}g_F(X_F,Y_F)=\mu f^2g_F(X_F,Y_F),\nonumber
\end{equation}
where $\phi_F=\phi$ at a fixed point of the base $B$ and $\tilde{f}=f\Delta f+(n-1)\|\nabla f\|^2_B$. Thus finally we get from the above equation
\begin{equation}
   2H^{\phi _F}_F(X_F,Y_F)+2Ric^F(X_F,Y_F)=(\mu f^2+\tilde{f})g_F(X_F,Y_F).\nonumber
\end{equation}
Therefore if the warping function $f$ is constant, the term $\tilde{f}=f\Delta f+(n-1)\|\nabla f\|^2_B$ vanishes from the right hand side of the above equation and we get the following\par
\medskip
\begin{equation}
   2H^{\phi _F}_F(X_F,Y_F)+2Ric^F(X_F,Y_F)=\mu f^2g_F(X_F,Y_F).
\end{equation}
Thus $(F,g_F,\mu f^2,\phi_F)$ is a conformal gradient Ricci soliton. Hence from the above observations and equations $(2.10)$ and $(2.11)$ we can state the following
\begin{thm}
  Let $(M,g)=(B\times_fF,g_B\oplus f^2g_F)$ be an warped product of two Riemannian manifolds $(B,g_B)$ and $(F,g_F)$ with warping function $f$, dim$B=m$ and dim$F=n$. If $(M,g,\mu,\phi)$ be a conformal gradient Ricci soliton, then
   \begin{enumerate}
     \item the base $(B,g_B,\mu,\phi _B-n\ln f)$ is a conformal gradient Ricci soliton with $\phi_B=\phi$ at a fixed point of the fiber $F$.
     \item the fiber $(F,g_F,\mu f^2,\phi_F)$ is a conformal gradient Ricci soliton with $\phi_F=\phi$ at a fixed point of the base $B$, provided the warping function $f$ is constant.
   \end{enumerate}
   \end{thm}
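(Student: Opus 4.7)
The plan is to substitute $\xi = \nabla\phi$ into the conformal Ricci soliton equation, producing the identity $2H^\phi + 2\mathrm{Ric} = \mu g$ on $M$, and then to test this identity separately against pairs of lifted vector fields from the base and from the fiber. In each restricted direction Lemma 1.1 gives an explicit decomposition of $\mathrm{Ric}$ into a factor-intrinsic piece plus a correction involving $f$, and the game is to absorb that correction into a modification of the potential function (on the base) or to kill it by requiring $f$ to be constant (on the fiber).

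First I would handle the base. Taking $X = X_B$, $Y = Y_B$ lifted from $\mathfrak{X}(B)$, the warped metric restricts to $g_B$ and the full Hessian reduces to the intrinsic Hessian $H^{\phi_B}_B$ of $\phi_B := \phi(\cdot,q_0)$ for a fixed $q_0 \in F$ (since for horizontal fields the connection on $M$ agrees with the base connection). Lemma 1.1(3) then turns the soliton equation into
\[
2H^{\phi_B}_B(X_B,Y_B) + 2\mathrm{Ric}^B(X_B,Y_B) - \tfrac{2n}{f}H^f_B(X_B,Y_B) = \mu\, g_B(X_B,Y_B).
\]
Using the Hessian identity that allows $\tfrac{n}{f}H^f_B$ to be recognized as $n H^{\ln f}_B$ in this combination, I rewrite the left-hand side as $2H^{\phi_B - n\ln f}_B + 2\mathrm{Ric}^B$, which is exactly the conformal gradient soliton equation on $B$ with potential $\phi_B - n\ln f$ and the same parameter $\mu$.

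Next I would handle the fiber. Taking $X = X_F$, $Y = Y_F$ lifts from $\mathfrak{X}(F)$, the warped metric contributes a factor $f^2$ on the right and the ambient Hessian restricts to $H^{\phi_F}_F$ where $\phi_F := \phi(p_0,\cdot)$ with $p_0 \in B$ fixed. Lemma 1.1(4), after multiplication by the warping factor, produces $\mathrm{Ric}(X_F,Y_F) = \mathrm{Ric}^F(X_F,Y_F) - \tilde f\, g_F(X_F,Y_F)$ with $\tilde f = f\Delta f + (n-1)\|\nabla f\|_B^2$. Substituting yields $2H^{\phi_F}_F + 2\mathrm{Ric}^F = (\mu f^2 + \tilde f)\, g_F$. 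Imposing that $f$ be constant forces $\tilde f \equiv 0$ and keeps $\mu f^2$ a genuine constant, delivering the fiber soliton equation with parameter $\mu f^2$ and potential $\phi_F$.

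The main technical point to be careful with is the Hessian manipulation that converts $-\tfrac{n}{f}H^f_B$ into the Hessian of the modified potential $\phi_B - n\ln f$; one has to unpack $\nabla^B \ln f = \nabla^B f / f$ and verify that the correction terms line up correctly in the combination used. The second delicate issue is purely technical: the fiber statement requires $f$ constant precisely so that $\mu f^2$ is a scalar constant (as the soliton definition demands) and so that the inhomogeneous $\tilde f g_F$ tail vanishes; relaxing this would at best yield an almost-soliton structure on $F$, not a soliton proper.
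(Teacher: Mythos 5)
Your proposal follows essentially the same route as the paper's own proof: substitute $\xi=\nabla\phi$, restrict the soliton identity to lifted horizontal and vertical fields, invoke Lemma 1.1(3)--(4) for the Ricci decomposition, absorb the $-\tfrac{n}{f}H^f_B$ term into the potential $\phi_B-n\ln f$ on the base, and kill $\tilde f$ on the fiber by assuming $f$ constant. The one point you flag as delicate --- that $\tfrac{1}{f}H^f_B$ differs from $H^{\ln f}_B$ by a $\tfrac{1}{f^2}\,df\otimes df$ term --- is glossed over in the paper as well ("using the properties of Hessian"), so your treatment is on par with the source.
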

\par
\medskip
\medskip
\section{\textbf{Effect of certain special types of vector fields on conformal Ricci soliton on warped product manifolds}}
The main purpose of this section is to study the effects of some special types of smooth vector fields on conformal Ricci solitons on warped product spaces. In particular, we will focus on Killing vector fileds, conformal vector fields and concurrent vector fields. So, We have included some necessary definitions before proceeding further.\par
\medskip
\begin{defn}
  A smooth vector field $X$ on a Riemannian manifold $(M,g)$ is called
  \begin{enumerate}
    \item a Killing vector field or an infinitesimal isometry, if the local $1$-parameter group of transformations generated by $X$ in a neighbourhood of each point of $M$ consists of local isometries, or in otherwords, if $X$ satisfies $\mathcal{L}_{X}g=0$ and
    \item a conformal vector field if $X$ satisfies $\mathcal{L}_{X}g=\rho g$,
  \end{enumerate}
  where $\rho$ is a smooth function on the manifold $M$ and $\mathcal{L}_{X}g$ denotes the Lie differentiation of the Riemannian metric $g$ in the direction of the vector field $X$.
\end{defn}
\par
\medskip
So, based on the above definition our first result of this section is the following
\begin{prop}
  Let $(M,g)=(B\times_fF,g_B\oplus f^2g_F)$ be an warped product of two Riemannian manifolds $(B,g_B)$ and $(F,g_F)$ with warping function $f$, dim$B=m$ and dim$F=n$. If $(M,g,\mu,\xi)$ is a conformal Ricci soliton and any one of the following conditions hold
  \begin{enumerate}
    \item $\xi=\xi_B$ and $\xi_B$ is a Killing vector field on the base $B$.
    \item $\xi=\xi_F$ and $\xi_F$ is a Killing vector field on the fiber $F$.
  \end{enumerate}
  Then the manifold $(M,g)$ becomes an Einstein manifold.
\end{prop}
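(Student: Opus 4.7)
The plan is to use the decomposition of the Lie derivative from equation (2.2), namely
\[
\mathcal{L}_\xi g = \mathcal{L}^B_{\xi_B} g_B + f^2 \mathcal{L}^F_{\xi_F} g_F + 2 f \xi_B(f) g_F,
\]
and to argue that in each of the two hypothesized situations the whole right-hand side vanishes identically on $M$, so that $\xi$ is Killing on the ambient warped product. Once $\mathcal{L}_\xi g = 0$ on $M$, the conformal Ricci soliton equation (1.1) collapses to $2\,Ric = \mu g$, i.e., $Ric = \frac{\mu}{2} g$, which is precisely the Einstein condition with Einstein constant $\frac{\mu}{2}$.

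I would dispatch case (2) first, since it is cleaner. Here $\xi = \xi_F$ is purely vertical, so $\xi_B \equiv 0$; both the first summand $\mathcal{L}^B_{\xi_B} g_B$ and the cross term $2f\xi_B(f)g_F$ in (2.2) vanish automatically, while the middle summand vanishes by the Killing hypothesis $\mathcal{L}^F_{\xi_F} g_F = 0$ on $F$. Therefore $\mathcal{L}_\xi g = 0$ on $M$, and the soliton equation gives the Einstein conclusion at once.

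Case (1) runs in parallel but is more delicate. With $\xi = \xi_B$ horizontal one has $\xi_F = 0$, which removes the middle summand, and the Killing assumption on $B$ removes the first summand, leaving the cross term $2 f \xi_B(f) g_F$. To conclude $\mathcal{L}_\xi g = 0$ I would argue that, under the stated setup, a horizontal lift that is to behave as a Killing field of the warped metric must also preserve the conformal factor on the vertical fibers, giving $\xi_B(f) = 0$; alternatively, one splits (1.1) along horizontal-horizontal and vertical-vertical blocks using Lemma 1.1, reads off that the horizontal block forces the Ricci operator on $B$-directions to act as $\frac{\mu}{2}g_B$, and matches this against the vertical block to extract $\xi_B(f) = 0$. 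Either way, once that cross term disappears the soliton equation yields the same Einstein conclusion.

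The main obstacle is precisely this point in case (1): justifying that the bare condition "$\xi_B$ is Killing on $B$" is enough to kill $2 f \xi_B(f) g_F$ in (2.2), since Killing on the base is, in general, strictly weaker than Killing on $M$. Everything else in the argument reduces to routine substitution of Lemma 1.1 and equation (2.2) into the soliton equation (1.1), with no further delicate analysis needed.
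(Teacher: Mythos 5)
Your case (2) is complete and is exactly the paper's argument: with $\xi=\xi_F$ one has $\xi_B=0$, so all three summands on the right of (2.2) vanish, $\mathcal{L}_{\xi}g=0$, and (1.1) collapses to $Ric=\frac{\mu}{2}g$. The obstacle you flag in case (1) is a genuine gap, and you should be aware that the paper's own proof does not close it: it writes that $\mathcal{L}^B_{\xi_B}g_B=0$ ``used in equation (2.2)'' gives $\mathcal{L}_{\xi}g=0$, thereby silently discarding the cross term $2f\xi_B(f)g_F$, which survives whenever $\xi_B$ is Killing on $B$ but $\xi_B(f)\neq 0$ (e.g.\ a translation field on a Euclidean factor with $f$ non-constant along it). Killing on the base is, as you say, strictly weaker than Killing on $M$ in the warped metric.

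Neither of your two proposed repairs closes the gap. The first is circular: it assumes $\xi$ behaves as a Killing field of the warped metric $g$, which is precisely what must be deduced from the weaker hypothesis that $\xi_B$ is Killing for $g_B$. The second, when actually carried out, shows why the claim cannot follow: the horizontal block of (1.1) gives $Ric(X,Y)=\frac{\mu}{2}g(X,Y)$, while the vertical block gives $Ric(U,V)=\bigl(\frac{\mu}{2}-\frac{\xi_B(f)}{f}\bigr)g(U,V)$, so the two blocks are consistent with a single Einstein constant exactly when $\xi_B(f)=0$; the splitting exhibits $\xi_B(f)=0$ as the missing hypothesis, not as a consequence of the stated ones. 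The honest version of case (1) therefore needs $\xi_B(f)=0$ added to the hypotheses --- which is exactly the condition the paper does impose in Theorem 3.2, where both lifts are assumed Killing and $(M,g)$ is concluded to be Einstein \emph{if} $\xi_B(f)=0$. With that extra assumption your argument (and the paper's) goes through verbatim.
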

\begin{proof}
As per our assumption $(M,g,\mu,\xi)$ being a conformal Ricci soliton, it satisfies equation $(1.1)$ and we get
\begin{equation}
\mathcal{L}_{\xi}g+2Ric=\mu g,
\end{equation}
 Now let, $\xi=\xi_B$, and $\xi_B$ is Killing on $B$, we get ${\mathcal{L}}^B_{\xi_B}g_B=0$. Then using it in equation $(2.2)$ we have $\mathcal{L}_{\xi}g=0$. Therefore equation $(3.1)$ gives us $Ric=\frac{\mu}{2}g$ and this implies $(M,g)$ is Einstein manifold.\par
  \medskip
  Again if $\xi=\xi_F$ and $\xi_F$ is a Killing vector field on the fiber $F$, ${\mathcal{L}}^F_{\xi_F}g_F=0$. Then using equations $(2.2)$ and $(3.1)$ and proceeding similarly as the first part of the proof, it can be easily shown that in this case also $(M,g)$ is Einstein.

\end{proof}
\begin{thm}
  Let $(M,g)=(B\times_fF,g_B\oplus f^2g_F)$ be an warped product of two Riemannian manifolds $(B,g_B)$ and $(F,g_F)$ with warping function $f$, dim$B=m$ and dim$F=n$. If $(M,g,\mu,\xi)$ is a conformal Ricci soliton and $\xi_B$ is Killing vector field on the base $B$;  then the base $(B, g_B, \mu, -n\ln f)$ is a conformal gradient Ricci soliton; where $\xi_B$ is the lift of the vector field $\xi$ to $\mathfrak{X}(B)$.
\end{thm}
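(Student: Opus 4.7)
The plan is to recognize this as a direct specialization of Theorem~2.1 under the Killing hypothesis on $\xi_B$, using the standard identity that equates the Lie derivative along a gradient field with twice the corresponding Hessian. First I would invoke Theorem~2.1 on the given conformal Ricci soliton $(M,g,\mu,\xi)$, which already yields that the base $(B,g_B,\mu,\xi_B - n\nabla^B\ln f)$ is itself a conformal Ricci soliton; in equation form this reads
\[
\mathcal{L}^B_{\xi_B - n\nabla^B \ln f}\, g_B + 2\,Ric^B \;=\; \mu\, g_B.
\]

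Next I would exploit linearity of the Lie derivative in its vector slot to split the first term as $\mathcal{L}^B_{\xi_B}\, g_B - n\,\mathcal{L}^B_{\nabla^B \ln f}\, g_B$, and then apply the Killing hypothesis $\mathcal{L}^B_{\xi_B} g_B = 0$ to eliminate the first piece. The remaining term involves a gradient vector field, so I would convert it into a Hessian via the classical identity $\mathcal{L}^B_{\nabla^B h}\, g_B = 2\,H^h_B$ valid for any $h\in C^\infty(B)$. Taking $h = -n\ln f$ (well defined since $f>0$ by the warped product hypothesis) gives $-n\,\mathcal{L}^B_{\nabla^B \ln f}\, g_B = 2\,H^{-n\ln f}_B$, and substituting back produces
\[
2\,H^{-n\ln f}_B + 2\,Ric^B \;=\; \mu\, g_B,
\]
which is precisely the conformal gradient Ricci soliton equation on the base $(B,g_B)$ with potential function $-n\ln f$, yielding the claim.

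The main obstacle is essentially cosmetic: the result is a short corollary of Theorem~2.1 once the Killing hypothesis is imposed, and the only care required is bookkeeping of signs and factors of $n$ in order to recognize $-n\nabla^B\ln f$ as the gradient of the smooth function $-n\ln f$. Indeed, one could equally well state this result as a corollary of Theorem~2.1 rather than as an independent theorem, and no additional analytic input beyond the Hessian/Lie-derivative identity is needed.
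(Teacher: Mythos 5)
Your proposal is correct and follows essentially the same route as the paper: both invoke Theorem~2.1 to get the soliton equation for $(B,g_B,\mu,\xi_B-n\nabla^B\ln f)$, kill the $\mathcal{L}^B_{\xi_B}g_B$ term by the Killing hypothesis, and convert the remaining gradient term into the Hessian $H^{-n\ln f}_B$. The only (cosmetic) difference is that the paper passes through its intermediate identity $(2.7)$ involving $\tfrac{n}{f}H^f$ before reassembling the Hessian, whereas you apply $\mathcal{L}^B_{\nabla^B h}g_B = 2H^h_B$ directly to $h=-n\ln f$.
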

\begin{proof}
  Since it is given that $(M,g,\mu,\xi)$ is a conformal Ricci soliton from theorem $2.1$ it follows that the base $(B,g_B,\mu,\xi_B-n\nabla ^B\ln f)$ is also a conformal Ricci soliton and hence it satisfies equation $1.1$. Thus we can write
  \begin{equation}
    {\mathcal{L}}^B_{\xi_B-n\nabla ^B\ln f}g_B+2Ric^B=\mu g_B.
  \end{equation}
Again using equation $(2.7)$ the above equation $(3.2)$ becomes
\begin{equation}
    {\mathcal{L}}^B_{\xi_B}g_B-2\frac{n}{f}H^f+2Ric^B=\mu g_B.\nonumber
  \end{equation}
Now, as $\xi_B$ is Killing vector field on the base $B$, we have ${\mathcal{L}}^B_{\xi_B}g_B=0$. Thus with the help of this, the above equation gives us
\begin{equation}
    -2\frac{n}{f}H^f+2Ric^B=\mu g_B.\nonumber
  \end{equation}
  Thus using the properties of Hessian, the above equation finally yields
  \begin{equation}
    2H^{-n\ln f}+2Ric^B=\mu g_B.
  \end{equation}
  Hence compairing the above equation $(3.3)$ with the conformal gradient Ricci soliton equation $(2.9)$ completes the proof.
\end{proof}
\par
\medskip
We conclude this portion of study of Killing vector fields on conformal Ricci soliton warped product manifolds with the following result
\begin{thm}
   Assume that $(M,g)=(B\times_fF,g_B\oplus f^2g_F)$ be an warped product of two Riemannian manifolds $(B,g_B)$ and $(F,g_F)$ with warping function $f$, dim$B=m$ and dim$F=n$. Let $(M,g,\mu,\xi)$ be a conformal Ricci soliton and both the lifts $\xi_B$ and $\xi_F$ are Killing on the base $B$ and the fiber $F$ respectively. Then the manifold $(M,g)$ is Einstein if $\xi_B(f)=0$.
\end{thm}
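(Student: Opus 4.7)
The plan is to combine the conformal Ricci soliton equation with the warped product decomposition of the Lie derivative already established in equation $(2.2)$, namely
\begin{equation*}
\mathcal{L}_{\xi}g \;=\; {\mathcal{L}}^B_{\xi_B}g_B + f^2{\mathcal{L}}^F_{\xi_F}g_F + 2f\xi_B(f)\,g_F.
\end{equation*}
The strategy is essentially a strengthening of Proposition $3.1$: there, the soliton vector field was assumed to come entirely from the base or entirely from the fiber, and one Killing assumption sufficed to kill $\mathcal{L}_\xi g$. Here $\xi$ has components in both factors, so we need three inputs to trivialize $\mathcal{L}_\xi g$, namely the two Killing hypotheses plus the extra scalar condition $\xi_B(f)=0$.

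First, I would apply the two Killing hypotheses to $(2.2)$: the term ${\mathcal{L}}^B_{\xi_B}g_B$ vanishes because $\xi_B$ is Killing on $B$, and the term $f^2{\mathcal{L}}^F_{\xi_F}g_F$ vanishes because $\xi_F$ is Killing on $F$. This reduces the decomposition to $\mathcal{L}_{\xi}g = 2f\xi_B(f)\,g_F$. Next, invoking the remaining hypothesis $\xi_B(f)=0$ makes this last term vanish as well, so $\mathcal{L}_{\xi}g = 0$ identically on $M$.

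Finally, I would substitute this into the defining conformal Ricci soliton equation $(1.1)$, i.e.\ $\mathcal{L}_{\xi}g+2\,Ric = \mu g$, which collapses to $Ric = \tfrac{\mu}{2}\,g$ with $\mu = 2\lambda - (p+\tfrac{2}{n})$ a constant (at each instant of the conformal pressure). This is exactly the Einstein condition on $(M,g)$, completing the proof.

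There is no real obstacle here; every step is a direct substitution and the only subtle point is noticing why the auxiliary hypothesis $\xi_B(f)=0$ is precisely the right one — it is the unique extra scalar condition required to annihilate the leftover cross term $2f\xi_B(f)g_F$ in the warped product Lie derivative, which otherwise survives even when both projected components of $\xi$ are Killing. The rest is formally parallel to the proof of Proposition $3.1$.
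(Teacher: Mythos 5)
Your proposal is correct and follows essentially the same route as the paper: both use the two Killing hypotheses in equation $(2.2)$ to reduce $\mathcal{L}_{\xi}g$ to the cross term $2f\xi_B(f)g_F$, kill it with $\xi_B(f)=0$, and read off $Ric=\frac{\mu}{2}g$ from the soliton equation. The only (immaterial) difference is that the paper substitutes the reduced Lie derivative into the soliton equation before invoking $\xi_B(f)=0$, whereas you invoke it first.
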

\begin{proof}
  Since it is given that both $\xi_B$ and $\xi_F$ are Killing, we have ${\mathcal{L}}^B_{\xi_B}g_B=0$ and ${\mathcal{L}}^F_{\xi_F}g_F=0$. Then using these values in equation $(2.2)$ we get
  \begin{equation}
  \mathcal{L}_{\xi}g=2f\xi_B(f)g_F.
  \end{equation}
  Again as per our hypothesis $(M,g,\mu,\xi)$ being a conformal Ricci soliton, from equation $(1.1)$ we get
  \begin{equation}
\mathcal{L}_{\xi}g+2Ric=\mu g.\nonumber
\end{equation}
Now using equation $(3.4)$ in the above equation, gives us
 \begin{equation}
2f\xi_B(f)g_F+2Ric=\mu g.
\end{equation}
Thus if $\xi_B(f)=0$, the above equation $(3.5)$ yields $Ric=\frac{\mu}{2}g$, which implies the manifold $(M,g)$ is Einstein and this completes the proof.
\end{proof}
Now we shall focus on the effect of conformal vector fields on warped product manifolds admitting conformal Ricci solitons. In this direction a very immediate result is the following
\begin{prop}
  Let $(M,g)=(B\times_fF,g_B\oplus f^2g_F)$ be an warped product of two Riemannian manifolds $(B,g_B)$ and $(F,g_F)$ with warping function $f$, dim$B=m$ and dim$F=n$. Let $(M,g,\mu,\xi)$ is a conformal Ricci soliton. Then the manifold $(M,g)$ becomes an Einstein manifold with factor $(\frac{\mu}{2}-\rho)$ if and only if the vector field $\xi$ is conformal with factor $2\rho$.
\end{prop}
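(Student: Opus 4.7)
The plan is to prove the equivalence by direct algebraic manipulation of the defining conformal Ricci soliton equation $\mathcal{L}_{\xi}g + 2\,Ric = \mu g$ given in $(1.1)$, since no deeper warped product structure is actually required once we observe that both the Einstein condition and the conformality of $\xi$ are pointwise multiples of the metric $g$.

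For the forward direction, I would assume that $\xi$ is a conformal vector field on $(M,g)$ with factor $2\rho$, so by Definition $3.1(2)$ we have $\mathcal{L}_{\xi}g = 2\rho\, g$. Substituting this into the soliton equation $(1.1)$ yields $2\rho\, g + 2\,Ric = \mu g$, and solving for the Ricci tensor gives $Ric = \left(\frac{\mu}{2}-\rho\right)g$, which is exactly the statement that $(M,g)$ is Einstein with factor $\frac{\mu}{2}-\rho$.

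For the converse, I would assume $(M,g)$ is Einstein with factor $\frac{\mu}{2}-\rho$, i.e.\ $Ric = \left(\frac{\mu}{2}-\rho\right)g$. Plugging this into $(1.1)$ and isolating the Lie derivative term produces
\begin{equation*}
\mathcal{L}_{\xi}g \;=\; \mu g - 2\,Ric \;=\; \mu g - (\mu - 2\rho)g \;=\; 2\rho\, g,
\end{equation*}
which is precisely the condition in Definition $3.1(2)$ for $\xi$ to be conformal with factor $2\rho$.

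The proof is essentially a two-line manipulation, so I do not anticipate a genuine obstacle; the only thing to be careful about is to phrase it as a true equivalence (both implications) and to keep the factors $\mu$ and $\rho$ consistent with the conventions $\mu = 2\lambda - (p + 2/n)$ and $\mathcal{L}_{\xi}g = 2\rho g$ already fixed in the paper. Notably, the warped product decomposition $g = g_B \oplus f^2 g_F$ plays no role in the argument, so I would explicitly remark that this proposition holds for any Riemannian manifold admitting a conformal Ricci soliton, and that the warped product setting is included merely for continuity with the surrounding discussion.
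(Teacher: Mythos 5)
Your proof is correct and follows essentially the same route as the paper: substitute $\mathcal{L}_{\xi}g = 2\rho g$ into the soliton equation $(1.1)$ to obtain $Ric = (\tfrac{\mu}{2}-\rho)g$, and reverse the computation for the converse (which you spell out explicitly where the paper merely gestures at a ``reverse calculation''). Your added observation that the warped product structure is never used and the statement holds on any Riemannian manifold is accurate and worth keeping.
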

\begin{proof}
  $(M,g,\mu,\xi)$ being a conformal Ricci soliton, from $(1.1)$ we can write
  \begin{equation}
\mathcal{L}_{\xi}g+2Ric=\mu g
\end{equation}
  Since the vector field $\xi$ is conformal with factor $2\rho$, by definition $3.1$ we have $\mathcal{L}_{X}g=2\rho g$, where $\rho$ is a smooth function. Thus using this value in equation $(3.6)$ finally we get
   \begin{equation}
Ric=(\frac{\mu}{2}-\rho)g.
\end{equation}
This implies $(M,g)$ is an Einstein manifold. Similarly by reverse calculation process it can be shown that if $(M,g)$ is an Einstein manifold with factor $(\frac{\mu}{2}-\rho)$ then $\xi$ becomes conformal with factor $2\rho$. This completes the proof.
\end{proof}
It is to be noted that in the above result we have discussed on conformal Ricci solitons with the vector field $\xi$ is taken conformal. So it is natural to ask whether it is necessary to consider $\xi$ conformal as a whole, or is there a weaker condition than this. The following theorem could put some light on it.
\begin{thm}
   Assume that $(M,g)=(B\times_fF,g_B\oplus f^2g_F)$ be an warped product of two Riemannian manifolds $(B,g_B)$ and $(F,g_F)$ with warping function $f$, dim$B=m$ and dim$F=n$. Let $(M,g,\mu,\xi)$ be a conformal Ricci soliton and both the lifts $\xi_B$ and $\xi_F$ are conformal on the base $B$ and the fiber $F$ with factors $2\rho_B$ and $2\rho_F$ respectively; where $\rho_B$ and $\rho_F$ are two smooth functions. Then the manifold $(M,g)$ is Einstein provided $\rho_B=\rho_F+\xi_B(\ln f)$.
\end{thm}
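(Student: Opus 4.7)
The plan is to follow the template of Theorem 3.3 (and Proposition 3.2), decomposing $\mathcal{L}_\xi g$ via the warped-product identity (2.2) and then using the assumed conformal action on each factor to collapse everything to a single conformal expression on $M$. Once $\mathcal{L}_\xi g$ is proportional to $g$, the soliton equation $(1.1)$ immediately forces $(M,g)$ to be Einstein.

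Concretely, I would first invoke equation $(2.2)$ to write
\begin{equation}
\mathcal{L}_\xi g \;=\; {\mathcal{L}}^B_{\xi_B}g_B + f^2\,{\mathcal{L}}^F_{\xi_F}g_F + 2f\,\xi_B(f)\,g_F. \nonumber
\end{equation}
Substituting the conformal hypotheses ${\mathcal{L}}^B_{\xi_B}g_B = 2\rho_B\,g_B$ and ${\mathcal{L}}^F_{\xi_F}g_F = 2\rho_F\,g_F$ gives
\begin{equation}
\mathcal{L}_\xi g \;=\; 2\rho_B\,g_B + \bigl(2\rho_F f^2 + 2f\,\xi_B(f)\bigr)\,g_F. \nonumber
\end{equation}

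The next step is to recognize that the hypothesis $\rho_B = \rho_F + \xi_B(\ln f)$ is exactly what is needed to identify the coefficient of $g_F$ with $2\rho_B f^2$. Indeed, $2\rho_F f^2 + 2f\,\xi_B(f) = 2f^2\bigl(\rho_F + \xi_B(f)/f\bigr) = 2f^2\bigl(\rho_F + \xi_B(\ln f)\bigr) = 2\rho_B f^2$. Therefore $\mathcal{L}_\xi g = 2\rho_B(g_B + f^2 g_F) = 2\rho_B\,g$, i.e.\ $\xi$ is a conformal vector field on the whole of $M$ with factor $2\rho_B$.

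Finally, plugging this into the conformal Ricci soliton equation $(1.1)$ gives $2\rho_B\,g + 2\,\mathrm{Ric} = \mu g$, so that $\mathrm{Ric} = \bigl(\tfrac{\mu}{2}-\rho_B\bigr)g$, which proves that $(M,g)$ is Einstein. The only non-routine step is the algebraic reconciliation of the $g_B$ and $g_F$ coefficients, and it is precisely here that the stated condition $\rho_B = \rho_F + \xi_B(\ln f)$ enters; everything else is a direct application of Definition 3.1 and the already established warped-product Lie derivative formula $(2.2)$.
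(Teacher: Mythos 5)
Your proposal is correct and follows essentially the same route as the paper: both decompose $\mathcal{L}_\xi g$ via equation $(2.2)$, insert the conformal hypotheses, and use $\rho_B=\rho_F+\xi_B(\ln f)$ to collapse the $g_B$ and $g_F$ terms into a multiple of $g$, yielding $\mathrm{Ric}=(\tfrac{\mu}{2}-\rho_B)g$. The only cosmetic difference is that you perform the collapse on $\mathcal{L}_\xi g$ before invoking the soliton equation, whereas the paper substitutes into the soliton equation first and then simplifies the resulting expression for $\mathrm{Ric}$.
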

\begin{proof}
Since $\xi_B$ is conformal on the base $B$ with factor $2\rho_B$, we have ${\mathcal{L}}^B_{\xi_B}g_B=2\rho_Bg_B$. Also $\xi_F$ being conformal with factor $2\rho_F$, we get ${\mathcal{L}}^F_{\xi_F}g_F=2\rho_Fg_F$. Then  using these two values in equation $(2.2)$ we get
\begin{equation}
\mathcal{L}_{\xi}g=2(\rho_Bg_B+f^2\rho_Fg_F+f\xi_B(f)g_F).
\end{equation}
Again $(M,g,\mu,\xi)$ being a conformal Ricci soliton, from equation $(1.1)$ and the above equation $(3.8)$ we have
\begin{equation}
2(\rho_Bg_B+f^2\rho_Fg_F+f\xi_B(f)g_F+Ric)=\mu g.\nonumber
\end{equation}
The above equation can be rewritten as
\begin{equation}
Ric=\frac{\mu}{2}g-\rho_Bg_B-f^2(\rho_F+\xi_B(\ln f))g_F.
\end{equation}
Hence if $\rho_B=\rho_F+\xi_B(\ln f)$, and using equation $(1.2)$, the above equation $(3.9)$ finally gives us $Ric=(\frac{\mu}{2}-\rho_B)g$. This implies $(M,g)$ is Einstein and thus completes the proof.
\end{proof}
We end this section with our last theorem, which actually gives the converse part of the previous theorem. In the previous result we characterised the conformal Ricci soliton $(M,g,\mu,\xi)$ whereas our next result gives conditions under which a warped product manifold $(M,g)$ admits a conformal Ricci soliton.
\begin{thm}
  Let $(B,g_B,\mu,\xi_B)$ be a conformal Ricci soliton and $(F,g_F)$ be an Einstein manifold with factor $\beta$, where dim$B=m$ and dim$F=n$. Let $(M,g)=(B\times_fF,g_B\oplus f^2g_F)$ be an warped product of $(B,g_B)$ and $(F,g_F)$ with warping function $f$ and $\xi_F$ is conformal vector field with factor $2\rho$. Then $(M,g,\mu,\xi)$ is a conformal Ricci soliton if $H^f=0$ and the warping function $f$ satisfies the quadratic equation
  \begin{equation}
    (2\rho-\mu)f^2+2f\xi_B(f)+2\beta+2(1-n)k^2=0,\nonumber
  \end{equation}
  where $k^2=\|\nabla f\|^2_B=g_B(\nabla f,\nabla f)$ for some real number $k$.
\end{thm}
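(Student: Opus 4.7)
The plan is to verify the conformal Ricci soliton equation $\mathcal{L}_\xi g + 2\,\mathrm{Ric} = \mu g$ directly on $M = B\times_f F$ by expanding both sides through formulas $(2.2)$ and $(2.3)$, substituting the hypotheses one by one, and then matching $g_B$- and $g_F$-coefficients against the warped metric $g = g_B + f^2 g_F$ from $(1.2)$.

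First, I would take $\xi$ to be the natural lift so that $\xi = \xi_B + \xi_F$ in the product decomposition. Then by $(2.2)$ and $(2.3)$
\begin{equation}
\mathcal{L}_\xi g + 2\,\mathrm{Ric} = \bigl(\mathcal{L}^B_{\xi_B}g_B + 2\,\mathrm{Ric}^B\bigr) - \tfrac{2n}{f}H^f + f^{2}\mathcal{L}^F_{\xi_F}g_F + 2\,\mathrm{Ric}^F + \bigl(2f\xi_B(f) - 2\tilde f\bigr)g_F,\nonumber
\end{equation}
where $\tilde f = f\Delta f + (n-1)\|\nabla f\|_B^{2}$. Now I plug in the four hypotheses: the base soliton hypothesis gives $\mathcal{L}^B_{\xi_B}g_B + 2\,\mathrm{Ric}^B = \mu g_B$; the conformal condition on $\xi_F$ gives $f^2\mathcal{L}^F_{\xi_F}g_F = 2\rho f^{2} g_F$; the Einstein condition on the fiber gives $2\,\mathrm{Ric}^F = 2\beta g_F$; and $H^f = 0$ kills the Hessian term while also forcing $\Delta f = \mathrm{tr}(H^f) = 0$, so that $\tilde f$ reduces to $(n-1)\|\nabla f\|_B^{2}$. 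Moreover, $H^f = 0$ means $\nabla^B f$ is parallel on $B$, hence $\|\nabla f\|_B^{2}$ is a constant $k^{2}$.

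With these substitutions, the right-hand side of the displayed equation becomes
\begin{equation}
\mu g_B + \bigl(2\rho f^{2} + 2f\xi_B(f) + 2\beta - 2(n-1)k^{2}\bigr)g_F.\nonumber
\end{equation}
Comparing this with $\mu g = \mu g_B + \mu f^{2}g_F$, the $g_B$-parts agree automatically, while the $g_F$-parts agree precisely when
\begin{equation}
(2\rho - \mu)f^{2} + 2f\xi_B(f) + 2\beta + 2(1-n)k^{2} = 0,\nonumber
\end{equation}
which is the stated quadratic. Hence under the assumed conditions $(M,g,\mu,\xi)$ satisfies $(1.1)$ and is a conformal Ricci soliton.

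I do not anticipate a genuine obstacle here, since the theorem is essentially the converse packaging of the computations already performed in Theorems $2.1$ and $3.4$. The only point requiring care is the deduction from $H^f=0$ that both $\Delta f = 0$ and $\|\nabla f\|_B^{2}$ is constant; these are what allow us to replace $\tilde f$ by $(n-1)k^{2}$ and cast the fiber-part obstruction as a purely algebraic (quadratic in $f$) condition rather than a differential one.
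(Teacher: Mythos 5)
Your proposal is correct and follows essentially the same route as the paper: expand $\mathcal{L}_\xi g + 2\,\mathrm{Ric}$ via $(2.2)$ and $(2.3)$, substitute the base soliton equation, the Einstein condition on $F$, the conformal condition on $\xi_F$, and $H^f=0$, then match the $g_F$-coefficient against $\mu f^2$ to obtain the quadratic. Your added observation that $H^f=0$ forces $\nabla^B f$ to be parallel, so that $\|\nabla f\|_B^2$ is genuinely a constant $k^2$, is a small but worthwhile clarification that the paper leaves implicit.
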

\begin{proof}
  $(B,g_B,\mu,\xi_B)$ being a conformal Ricci soliton, from equation $(1.1)$ we get
  \begin{equation}
\mathcal{L}^B_{\xi_B}g_B+2Ric^B=\mu g_B.
\end{equation}
Again as $(F,g_F)$ is an Einstein manifold with factor $\beta$, the Ricci tensor is given by $Ric^F=\beta g_F$. Using this value in equation $(2.3)$ gives us
\begin{equation}
  Ric=Ric^B-\frac{n}{f}H^f+\beta g_F-\tilde{f}g_F,
\end{equation}
where $\tilde{f}=f\Delta f+(n-1)\|\nabla f\|^2_B$. Now, using equation $(3.10)$ in the equation $(2.2)$ we get
\begin{equation}
\mathcal{L}_{\xi}g=\mu g_B-2Ric^B+f^2{\mathcal{L}}^F_{\xi_F}g_F+2f\xi_B(f)g_F.
\end{equation}
Multiplying both sides of the equation $(3.11)$ by $2$ and then adding it with equation $(3.12)$ yields
\begin{equation}
\mathcal{L}_{\xi}g+2Ric=\mu g_B+f^2{\mathcal{L}}^F_{\xi_F}g_F+2f\xi_B(f)g_F+2(-\frac{n}{f}H^f+\beta g_F-\tilde{f}g_F).\nonumber
\end{equation}
Now since the vector field $\xi_F$ is conformal with factor $2\rho$ i.e; ${\mathcal{L}}^F_{\xi_F}g_F=2\rho g_F$, the above equation becomes
\begin{equation}
\mathcal{L}_{\xi}g+2Ric=\mu g_B+2f^2\rho g_F+2f\xi_B(f)g_F+2(-\frac{n}{f}H^f+\beta g_F-\tilde{f}g_F).
\end{equation}
As it is given that $H^f=0$, then it implies that $\Delta f=0$ and hence $\tilde{f}=f\Delta f+(n-1)\|\nabla f\|^2_B$ becomes $\tilde{f}=(n-1)\|\nabla f\|^2_B=(n-1)k^2$,  where $k^2=\|\nabla f\|^2_B=g_B(\nabla f,\nabla f)$ for some real number $k$. Therefore using these results in the above equation $(3.13)$ we get
\begin{eqnarray}
\mathcal{L}_{\xi}g+2Ric &=& \mu g_B+2f^2\rho g_F+2f\xi_B(f)g_F+2(\beta g_F-(n-1)k^2g_F) \nonumber\\
&=& \mu(g_B+f^2g_F)+(2f^2\rho-\mu f^+2f\xi_B(f)+2(\beta-(n-1)k^2))g_F. \nonumber
\end{eqnarray}
Thus if $(2f^2\rho-\mu f^2+2f\xi_B(f)+2(\beta-(n-1)k^2))=0$ i.e; if $f$ satisfies the quadratic equation $(2\rho-\mu)f^2+2f\xi_B(f)+2\beta+2(1-n)k^2=0$; the above equation finally becomes
\begin{equation}
\mathcal{L}_{\xi}g+2Ric=\mu(g_B+f^2g_F)=\mu g
\end{equation}
Therefore from equation $(3.14)$ we can conclude that $(M,g,\mu,\xi)$ is a conformal Ricci soliton and this completes the proof.
\end{proof}
\medskip
\medskip
\section{\textbf{Warped product manifolds admitting conformal Ricci soliton with concurrent vector field}}
K. Yano \cite{[16]} introduced concircular vector fields to study concircular mappings, which are basically conformal mappings that preserve geodesic circles. In Mathematical Physics and General Relativity concircular vector fields have many applications. B.Y. Chen in \cite{[17]} proved that a Loretzian manifold is a generalised Robertson-Walker spacetime if and only if it admits a timellike concircular vector field. A vector field $\xi$ on a Riemannian manifold $M$ satisfying
\begin{equation}
  \nabla_X\xi=\alpha X,
\end{equation}
for all vector fields $X\in\mathfrak{X}(M)$, is called a concircular vector field \cite{[9]}, where $\alpha$ is a non-trivial function on $M$. In particular, if the function $\alpha$ is constant one then the vector field $\xi$ is called a concurrent vector field. Thus we have the following definition \cite{[9]}
\begin{defn}
  A vector field $\xi$ on a Riemannian manifold $M$ is called a concurrent vector field if, for all vector fields $X\in\mathfrak{X}(M)$, the vector field $\xi$ satisfies the following equation
  \begin{equation}
  \nabla_X\xi=X.
\end{equation}
\end{defn}
Based on the above definition, in this section we study conformal Ricci solitons with the soliton vector field $\xi$ being concircular (also, concurrent) vector field. So in this direction our first result is as follows
\begin{thm}
    Let $(M,g,\mu,\xi)$ be a conformal Ricci soliton on an $n$-dimensional Riemannian manifold $(M,g)$  and the soliton vector field $\xi$ is concircular with factor $\alpha$, then
     \begin{enumerate}
       \item the manifold $(M,g)$ is an Einstein manifold with factor $(\mu-2\alpha)$ and
       \item the soliton is expanding, steady or shrinking according as $(p+2\alpha+\frac{1}{n})<0$, $(p+2\alpha+\frac{1}{n})=0$ or $(p+2\alpha+\frac{1}{n})>0$ respectively.
     \end{enumerate}
\end{thm}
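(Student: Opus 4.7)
\medskip
The plan is to reduce the theorem to two short algebraic manipulations: one that feeds the concircular identity into the soliton equation to recover the Einstein relation for part~(1), and a second that invokes the scalar curvature constraint built into the conformal Ricci flow in order to back out $\lambda$ from $\mu$ for part~(2).

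First, I would compute $\mathcal{L}_{\xi}g$ using the concircular hypothesis $\nabla_X\xi=\alpha X$. For all $X,Y\in\mathfrak{X}(M)$ the definition of the Lie derivative gives
\[
(\mathcal{L}_{\xi}g)(X,Y)=g(\nabla_X\xi,Y)+g(X,\nabla_Y\xi)=2\alpha\,g(X,Y),
\]
so $\mathcal{L}_{\xi}g=2\alpha g$. Substituting this into the conformal Ricci soliton equation $\mathcal{L}_{\xi}g+2\,Ric=\mu g$ from $(1.1)$ collapses it to $2\,Ric=(\mu-2\alpha)g$, which is exactly the Einstein relation claimed in part~(1).

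For part~(2), I would take the trace of the Einstein relation to express the scalar curvature $r$ as a multiple of $n=\dim M$, and then invoke the scalar curvature constraint $r(g)=-1$ that is built into the conformal Ricci flow (and hence inherited by every stationary profile). This pins down $\mu-2\alpha$ in terms of $n$; substituting the defining value $\mu=2\lambda-(p+\tfrac{2}{n})$ and solving for $\lambda$ produces a linear expression in $p$, $\alpha$ and $\tfrac{1}{n}$ whose sign coincides with that of $(p+2\alpha+\tfrac{1}{n})$. The expanding, steady, shrinking classification then drops out of the sign convention for $\lambda$ recalled in the introduction. The main point deserving care is the legitimacy of invoking $r(g)=-1$: it is not an additional hypothesis but a consequence of $(M,g,\mu,\xi)$ being a self-similar solution of the conformal Ricci flow, and I would record a brief remark to that effect before combining it with the Einstein relation from part~(1).
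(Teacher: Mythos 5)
Your proposal is correct and follows essentially the same route as the paper: compute $\mathcal{L}_{\xi}g=2\alpha g$ from the concircular condition, substitute into the soliton equation to obtain $Ric=(\mu-2\alpha)g$, then trace, impose the constraint $r(g)=-1$, and solve for $\lambda=\alpha+\frac{p}{2}+\frac{1}{2n}$ to read off the sign classification. Your added remark on why $r(g)=-1$ is legitimately invoked is a small clarification the paper states without comment, but it does not change the argument.
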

\begin{proof}
As per our assumption the soliton vector field $\xi$ is concircular with factor $\alpha$, then from equation $(4.1)$ we get $\nabla_X\xi=\alpha X$. Then using it in the definition of Lie differentiation we get
\begin{eqnarray}
% \nonumber % Remove numbering (before each equation)
  (\mathcal{L}_{\xi}g)(X,Y) &=& g(\nabla_X\xi,Y)+g(X,\nabla_Y\xi) \nonumber\\
   &=& g(\alpha X,Y)+g(X,\alpha Y) \nonumber\\
   &=& 2\alpha g(X,Y)
\end{eqnarray}
for all vector fields $X,Y\in\mathfrak{X}(M)$. Again, $(M,g,\mu,\xi)$ being a conformal Ricci soliton, using the value from $(4.3)$ in the equation $(1.1)$ we get
\begin{equation}
  Ric(X,Y)=(\mu-2\alpha)g(X,Y),
\end{equation}
for all vector fields $X,Y\in\mathfrak{X}(M)$, and $\mu=[2\lambda -(p+\frac{2}{n})]$. Thus equation $(4.4)$ proves that $(M,g)$ Einstein with factor $(\mu-2\alpha)$ and this completes the first part of the theorem.\par
\medskip
Again we know that for conformal Ricci flow, the scalar curvature $r(g)=-1$. So taking an orthonormal basis $\{ e_i : 1\leq i\leq n\}$ of the manifold $M$ and summing over $1\leq i\leq n$ in both sides of the equation $(4.4)$ gives us
\begin{equation}
  -1=r(g)=n(\mu-2\alpha).\nonumber
\end{equation}
Finally using the value $\mu=[2\lambda -(p+\frac{2}{n})]$ in the above equation and after simplification we get
\begin{equation}
 \lambda=\alpha+\frac{p}{2}+\frac{1}{2n}.
\end{equation}
We know that the soliton is expanding steady or shrinking if $\lambda <0$, $\lambda =0$ or $\lambda >0$, thus applying it in equation $(4.5)$ completes the proof.
\end{proof}
Next, we have a result on concurrent vector field, which immediately follows from the above theorem.
\begin{cor}
  $(M,g,\mu,\xi)$ be a conformal Ricci soliton with the soliton vector field $\xi$ is concurrent, then
  \begin{enumerate}
       \item the manifold $(M,g)$ is an Einstein manifold with factor $(\mu-2)$ and
       \item the soliton is expanding, steady or shrinking according as $(p+2+\frac{1}{n})<0$, $(p+2+\frac{1}{n})=0$ or $(p+2+\frac{1}{n})>0$ respectively.
     \end{enumerate}
   \end{cor}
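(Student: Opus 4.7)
The plan is very short: this corollary is a direct specialization of Theorem 4.1, so the only real task is to identify the correct value of the parameter $\alpha$ appearing there. First I would recall Definition 4.1, which prescribes $\nabla_X\xi=X$ for every $X\in\mathfrak{X}(M)$ whenever $\xi$ is concurrent, and compare this with the defining relation $\nabla_X\xi=\alpha X$ of a concircular vector field from equation $(4.1)$. The comparison shows that a concurrent vector field is nothing but a concircular vector field with constant factor $\alpha=1$.

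Once this identification is in hand, I would simply quote Theorem 4.1 with $\alpha=1$. Part $(1)$ of the theorem then gives $Ric=(\mu-2)g$, which is exactly the Einstein condition with factor $(\mu-2)$ claimed in $(1)$ of the corollary. For part $(2)$, substituting $\alpha=1$ into the trichotomy $(p+2\alpha+\frac{1}{n})\lessgtr 0$ produced by equation $(4.5)$ yields $(p+2+\frac{1}{n})\lessgtr 0$, matching the criterion stated in the corollary.

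Because the corollary is a pure substitution into an already proved result, there is no genuine obstacle; the only point requiring a line of justification is the conceptual reduction from concurrent to concircular, and that is essentially a definitional remark. I would therefore present the proof as a two-sentence derivation: first note that concurrency is the case $\alpha=1$ of concircularity, then invoke Theorem 4.1 with $\alpha=1$ to obtain both conclusions simultaneously.
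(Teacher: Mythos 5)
Your proposal is correct and matches the paper's own argument: the paper likewise proves the corollary by observing that a concurrent field is the concircular case $\alpha=1$ and substituting $\alpha=1$ into equations $(4.4)$ and $(4.5)$ of Theorem 4.1. Nothing further is needed.
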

\begin{proof}
  Proceeding similarly as theorem $4.1$ and then putting $\alpha=1$ in the equations $(4.4)$ and $(4.5)$ completes the proof.
\end{proof}
We conclude this section with the following theorem on concurrent vector field
\begin{thm}
   Assume that $(M,g)=(B\times_fF,g_B\oplus f^2g_F)$ be an warped product of two Riemannian manifolds $(B,g_B)$ and $(F,g_F)$ with warping function $f$, dim$B=m$ and dim$F=n$. Let $(M,g,\mu,\xi)$ be a conformal Ricci soliton with concurrent vector field $\xi$. If $f$ is constant and both the lifts $\xi_B$ and $\xi_F$ are concurrent on the base $B$ and the fiber $F$ then
  \begin{enumerate}
       \item the soliton $(M,g,\mu,\xi)$ is expanding, steady or shrinking according as $(\frac{p}{2}+\frac{1}{n}+1)<0$, $(\frac{p}{2}+\frac{1}{n}+1)=0$ or $(\frac{p}{2}+\frac{1}{n}+1)>0$ respectively,
       \item all the three manifolds $M, B$ and $F$ are Ricci flat manifolds and
       \item all the three manifolds $M, B$ and $F$ admit conformal gradient Ricci solitons.
     \end{enumerate}
   \end{thm}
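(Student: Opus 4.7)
The overall strategy exploits the simplification that, when $f$ is constant, the warped product reduces to an essentially Riemannian product: Lemma 1.1 collapses since $H^f$, $\tilde f$, and $\xi_B(f)$ all vanish, and $\nabla^B \ln f = 0$. My plan is to apply Corollary 4.1 (the concurrent case) or Theorem 4.1 (the concircular case) separately to each of $M$, $B$, and $F$, using Theorem 2.1 to transport the conformal Ricci soliton structure onto the base and the fiber.

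For part (1), apply Corollary 4.1 directly to the conformal Ricci soliton $(M, g, \mu, \xi)$ with its concurrent potential vector field. This immediately yields the expanding/steady/shrinking classification, with the stated algebraic condition coming from the formula $\lambda = 1 + \tfrac{p}{2} + \tfrac{1}{2n}$ derived inside the proof of Theorem 4.1. For part (2), Corollary 4.1 first supplies $\mathrm{Ric}_M = (\mu - 2)g$, and the simplified Lemma 1.1 decomposes this into $\mathrm{Ric}^B = (\mu - 2)g_B$ on horizontal vectors and $\mathrm{Ric}^F = f^2(\mu - 2)g_F$ on vertical ones. Theorem 2.1 then tells us that $B$ inherits the soliton structure $(B, g_B, \mu, \xi_B)$ and $F$ inherits $(F, g_F, \mu f^2, f^2 \xi_F)$, whose potential fields are respectively concurrent and concircular with factor $f^2$; applying Corollary 4.1 to $B$ and Theorem 4.1 to $F$ gives matching Einstein relations. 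Comparing the scalar curvature constraint $r = -1$ across these three manifolds then pins down $\mu = 2$, forcing Ricci flatness on $M$, $B$, and $F$.

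For part (3), a concurrent vector field is automatically a gradient: if $\nabla_X \xi = X$ for every $X$, then the function $\phi := \tfrac{1}{2} g(\xi, \xi)$ satisfies $X(\phi) = g(\nabla_X \xi, \xi) = g(X, \xi)$, so $\xi = \nabla \phi$. The same argument applied on $B$ and $F$ shows that $\xi_B$ and $\xi_F$, and hence $f^2 \xi_F$ since $f$ is constant, are gradient vector fields, so all three conformal Ricci solitons are in fact conformal gradient Ricci solitons. The main obstacle I anticipate is the rigorous justification of Ricci flatness in part (2): the three soliton-induced Einstein factors $(\mu - 2)$, $(\mu - 2)$, and $f^2(\mu - 2)$ are not automatically zero, so the argument must carefully combine the concurrence hypotheses on both factors with the scalar curvature constraint in order to conclude $\mu = 2$.
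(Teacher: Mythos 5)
There is a genuine gap at the heart of parts (1) and (2): the mechanism by which $\mu=2$ is obtained. The paper does \emph{not} get this from any scalar--curvature normalization. It uses the curvature identity satisfied by a concurrent field: if $\nabla^F_X\xi_F=X$ for all $X\in\mathfrak{X}(F)$, then
\begin{equation}
R^F(X,Y)\xi_F=\nabla^F_X\nabla^F_Y\xi_F-\nabla^F_Y\nabla^F_X\xi_F-\nabla^F_{[X,Y]}\xi_F=\nabla^F_XY-\nabla^F_YX-[X,Y]=0,\nonumber
\end{equation}
whence $Ric^F(\xi_F,\xi_F)=0$. Combined with the Einstein relation $Ric^F=c^2(\mu-2)g_F$ (which, as you correctly note, follows from Corollary 4.1 together with Lemma 1.1 and $\tilde f=0$ when $f=c$ is constant), and with $\|\xi_F\|^2\neq 0$, this forces $\mu=2$; Ricci flatness of $F$, then of $M$, then of $B$ (via $Ric^B=\frac{n}{f}H^f=0$) follows. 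Your proposed substitute --- ``comparing the scalar curvature constraint $r=-1$ across these three manifolds'' --- does not work: imposing $r=-1$ simultaneously on $M$, $B$ and $F$ with the Einstein factors $(\mu-2)$, $(\mu-2)$ and $c^2(\mu-2)$ would give $(m+n)(\mu-2)=m(\mu-2)=nc^2(\mu-2)=-1$, an inconsistent system (it forces $n=0$) rather than $\mu=2$. You flagged this step as the anticipated obstacle, but the idea that resolves it (concurrent $\Rightarrow$ $R(\cdot,\cdot)\xi=0$ $\Rightarrow$ $Ric(\xi,\xi)=0$) is absent from your outline, and it is the crux of the proof.

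A secondary consequence: for part (1) you propose to read the classification off Corollary 4.1, which (via $r(g)=-1$) yields $\lambda=1+\frac{p}{2}+\frac{1}{2n}$, whereas the theorem's stated threshold is $\frac{p}{2}+\frac{1}{n}+1$; the paper obtains the latter by solving $\mu=[2\lambda-(p+\frac{2}{n})]=2$ for $\lambda$, so part (1) is also downstream of the missing $\mu=2$ argument and your route would produce a different constant than the one claimed. Your part (3) --- taking $\phi=\frac{1}{2}g(\xi,\xi)$ and checking $X(\phi)=g(\nabla_X\xi,\xi)=g(X,\xi)$ so that $\xi=\mathrm{grad}\,\phi$, and likewise on $B$ and $F$ --- is exactly the paper's argument and is correct.
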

\begin{proof}
  Since $(M,g,\mu,\xi)$ is a conformal Ricci soliton on $M$ with concurrent vector field $\xi$, from first part of the corollary $4.1$ we can write
  \begin{equation}
  Ric(X,Y)=(\mu-2)g(X,Y),
\end{equation}
for all vector fields $X,Y\in\mathfrak{X}(M)$.\\
Now if we set $X=X_F$ and $Y=Y_F$, then from lemma $1.1$ and equation $(2.3)$ we get
\begin{equation}
  Ric(X_F,Y_F)=Ric^F(X_F,Y_F)-\tilde{f}g_F(X_F,Y_F),
\end{equation}
where $\tilde{f}=f\Delta f+(n-1)\|\nabla f\|^2_B$. Now using equation $(4.6)$ and $(1.2)$, in the above equation $(4.7)$ yields
\begin{equation}
  Ric^F(X_F,Y_F)=\tilde{f}g_F(X_F,Y_F)+(\mu-2)f^2g_F(X_F,Y_F),\nonumber
\end{equation}
where $\tilde{f}=f\Delta f+(n-1)\|\nabla f\|^2_B$. Since it is given that $f$ is constant, say $f=c$ for some constant $c$, then it implies that $\tilde{f}=0$ and thus the above equation becomes
\begin{equation}
  Ric^F(X_F,Y_F)=c^2(\mu-2)g_F(X_F,Y_F),
\end{equation}
for all vector fields $X_F,Y_F\in\mathfrak{X}(F)$. Thus from the above equation $(4.8)$ we can say that $F$ is Einstein. Now as the equation $(4.8)$ is true for any vector field in $\mathfrak{X}(F)$, by putting $X_F=Y_F=\xi_F$ in above we get
\begin{eqnarray}
  Ric^F(\xi_F,\xi_F) &=& c^2(\mu-2)g_F(\xi_F,\xi_F) \nonumber\\
  &=& c^2(\mu-2)\|\xi_F\|^2_F.
\end{eqnarray}
Let $\{ \xi_F, e_1, e_2, e_3,...., e_{n-1}\}$ be an orthonormal basis of $\mathfrak{X}(F)$. Then the curvature tensor of the manifold $F$ is given by
\begin{equation}
  R^F(\xi_F,e_i,\xi_F,e_i)=g_F(R^F(\xi_F,e_i)\xi_F,e_i). \nonumber
\end{equation}
Using the well-known formula for curvature, the above equation can be rewritten as
\begin{equation}
 R^F(\xi_F,e_i,\xi_F,e_i)= g_F(\nabla ^F_{\xi_F}\nabla ^F_{e_i}\xi_F-\nabla ^F_{e_i}\nabla ^F_{\xi_F}\xi_F
 -\nabla ^F_{[\xi_F,e_i]}\xi_F, e_i).
\end{equation}
Also since $\xi_F$ is concurrent vector field, from equation $(4.2)$ we have $\nabla_X\xi_F=X$, for all $X\in\mathfrak{X}(F)$ and using this in equation $(4.10)$ we get
\begin{equation}
 R^F(\xi_F,e_i,\xi_F,e_i)= g_F(\nabla ^F_{\xi_F}{e_i}-\nabla ^F_{e_i}\xi_F-[\xi_F,e_i], e_i)=0.\nonumber
\end{equation}
This implies $Ric^F(\xi_F,\xi_F)=0$ and then from equation $(4.9)$ we get $\mu=2$, i.e; $\mu=[2\lambda -(p+\frac{2}{n})]=2$. After simplification this gives $\lambda=(\frac{p}{2}+\frac{1}{n}+1)$ and the soliton is shrinking, steady or expanding according as $\lambda >0$, $\lambda =0$ or $\lambda <0$. This proves the first part of the theorem.\par
\medskip
 Now, using this value $\mu=2$ in equations $(4.8)$ and $(4.6)$ we have $Ric=Ric^F=0$. This proves that both the manifolds $M$ and $F$ are Ricci flat.\\
Again if we set $X=X_B$ and $Y=Y_B$, then from lemma $1.1$ we can write
\begin{equation}
Ric(X_B,Y_B)=Ric^B(X_B,Y_B)-\frac{n}{f}H^f(X_B,Y_B),\nonumber
\end{equation}
for all $X_B,Y_B\in\mathfrak{X}(B)$. Now since we just proved $Ric=0$, the above equation becomes
\begin{equation}
Ric^B(X_B,Y_B)=\frac{n}{f}H^f(X_B,Y_B).
\end{equation}
Since we assumed that $f$ is constant, it implies $H^f=0$ and thus the above equation $(4.11)$ finally gives us $Ric^B(X_B,Y_B)=0$, for all $X_B,Y_B\in\mathfrak{X}(B)$. Therefore we get $Ric^B=0$ and this proves that the manifold $B$ is Ricci flat. This completes the proof of the second part of the theorem.\par
\medskip
To prove the last part of the theorem, let us assume that $\phi=\frac{1}{2}g(\xi,\xi)$. Then
\begin{equation}
g(X, grad\phi)=X(\phi)=g(\nabla_X\xi,\xi),
\end{equation}
for all $X\in\mathfrak{X}(M)$. Again $\xi$ being concurrent, from equation $(4.2)$ we have $\nabla_X\xi=X$ and using this value in equation $(4.12)$ we get
\begin{equation}
g(X, grad\phi)=g(X,\xi)\nonumber
\end{equation}
for all $X\in\mathfrak{X}(M)$. Since the above equation is true for any vector field $X\in\mathfrak{X}(M)$, we can conclude that $\xi=grad\phi$. Hence $(M,g)$ admits a conformal gradient Ricci soliton.\\
 Again taking $\phi_B=\frac{1}{2}g(\xi_B,\xi_B)$ and $\phi_F=\frac{1}{2}g(\xi_F,\xi_F)$ and proceeding similarly we can show that $\xi_B=grad\phi_B$ and $\xi_F=grad\phi_F$. Also from theorem $2.1$ we know that since $(M,g)$ is conformal Ricci soliton, $B$ and $F$ both are conformal Ricci soliton. Hence can conclude that both the manifolds $B$ and $F$ admit conformal gradient Ricci soliton.
 \end{proof}
 \medskip
\medskip
\section{\textbf{Application of conformal Ricci soliton on generalized Robertson-Walker spacetimes}}
This section deals with the study of conformal Ricci solitons on a very well-known warped spacetime called a generalized Robertson-Walker spacetime which is an extension of the classical Robertson-Walker spacetimes. It is to be noted that, generalized Robertson-Walker spacetimes also obey the Weyl hypothesis, i.e; the world lines should be everywhere orthogonal to a family of spacelike slices. M. S\'{a}nchez \cite{[10]} characterized generalized Robertson-Walker spacetimes in terms of timelike and spatially conformal conformal vector fields. Also, a characterization of generalized Robertson-Walker spacetimes in terms of timelike concircular vector field has been studied by B.Y. Chen \cite{[17]}.
\begin{defn}
  A generalized Robertson-Walker spacetime is a warped product manifold $M=I\times_f F$ endowed with the Lorentzian metric
  \begin{equation}
    g=-dt^2\oplus f^2g_F,
  \end{equation}
  where the base is an open interval $I$ of $\mathbb{R}$ with its usual metric reversed $(I,-dt^2)$, the fiber is an $n$-dimensional Riemannian manifold $(F,g_F)$ and the warping function is any positive function $f>0$ on $I$.
\end{defn}
\par
\medskip
Based on the above definiton we will consider a generalized Robertson-Walker spacetime and study the effect of conformal Ricci soliton on it. Our main result of this section is the following
\begin{thm}
  Let $M=I\times_f F$ be a generalized Robertson-Walker spacetime endowed with the metric $g=-dt^2\oplus f^2g_F$ and let $\phi=\int_{c}^{t}f(z)dz$, for some constant $c\in I$. If $(M,g,\mu,\phi)$ admits a conformal gradient Ricci soliton, then
  \begin{enumerate}
    \item the generalized Robertson-Walker spacetime $(M,g)$ becomes Ricci flat if the soliton constant $\lambda$ satisfies the relation $\lambda=\dot{f}+\frac{p}{2}+\frac{1}{n}$ and
    \item the generalized Robertson-Walker spacetime $(M,g)$ is an Einstein manifold if the warping function $f$ is of the form $f(t)=at+b$, where $a,b$ are constants.
  \end{enumerate}
\end{thm}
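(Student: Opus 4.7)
The approach is to apply the conformal gradient Ricci soliton equation $2H^\phi + 2Ric = \mu g$ in a block-wise fashion adapted to the warped product structure, evaluating it separately on the base-base pair $(\partial_t, \partial_t)$, the mixed pair $(\partial_t, U)$, and the fiber-fiber pair $(U, V)$ with $U, V \in \mathfrak{X}(F)$. The crucial simplification is that the potential $\phi(t) = \int_c^t f(z)\,dz$ depends only on the base variable $t$, so $\dot\phi = f$ and every geometric quantity attached to $\phi$ reduces to an explicit expression in the warping function and its derivatives.

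First I would compute $\nabla\phi$. Since $\phi$ is constant along fibers, its gradient is purely along the base and equals $\nabla\phi = -f\,\partial_t$ (the sign coming from $g(\partial_t,\partial_t) = -1$). Using the warped-product connection identity $\nabla_U \partial_t = (\dot f/f)U$ from Lemma $1.1$, together with $\nabla_{\partial_t}\partial_t = 0$ on the one-dimensional base, a direct computation yields
\begin{equation}
H^\phi(\partial_t, \partial_t) = \dot f, \qquad H^\phi(\partial_t, U) = 0, \qquad H^\phi(U,V) = \dot f\, g(U,V). \nonumber
\end{equation}
On the Ricci side, Lemma $1.1$ with $\dim B = 1$ forces $Ric^B = 0$ and $H^f(\partial_t, \partial_t) = \ddot f$, so
\begin{equation}
Ric(\partial_t,\partial_t) = -\frac{n \ddot f}{f}, \qquad Ric(U,V) = Ric^F(U,V) - \tilde f\, g_F(U,V), \nonumber
\end{equation}
where $\tilde f = f \Delta f + (n-1)\|\nabla f\|_B^2$.

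Inserting these into the soliton equation gives one scalar relation in the $(\partial_t,\partial_t)$ slot tying $\mu$, $\dot f$ and $\ddot f$, and one tensorial relation in the $(U,V)$ slot tying $\mu$, $\dot f$, $\tilde f$, and $Ric^F$. For part $(1)$, the hypothesis $\lambda = \dot f + p/2 + 1/n$ is equivalent to $\mu = 2\dot f$; substituting this, the Hessian terms exactly cancel the $\dot f$ contributions on both blocks, and one reads off $Ric(\partial_t,\partial_t) = 0$ together with $Ric(U,V) = 0$, giving $Ric \equiv 0$. For part $(2)$, putting $f(t) = at+b$ annihilates $\ddot f$ and $H^f$ and reduces $\tilde f$ to the constant $(n-1)a^2$; the base-base equation then pins $\mu$ as a constant, and the fiber-fiber equation makes $Ric(U,V)$ a constant multiple of $g(U,V)$, so $(M,g)$ is Einstein.

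The main obstacle is careful sign bookkeeping. Lemma $1.1$ is formulated for a Riemannian base, whereas the base here carries the reversed metric $-dt^2$, so the identities for $\nabla f$, $\Delta f$, and $\|\nabla f\|_B^2$ must be read with the Lorentzian signature in mind before being fed into the soliton equation. A secondary subtlety is that $\lambda$ is genuinely constant, so the relation $\lambda = \dot f + p/2 + 1/n$ in part $(1)$ implicitly constrains $\dot f$ once $p$ is specified, which is precisely what makes it compatible with the Ricci-flat conclusion. Once these conventions are locked in, the remainder of the argument is a direct substitution with no hidden integrability step.
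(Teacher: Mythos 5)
Your overall strategy is the one the paper uses: compute the Hessian of $\phi$ block by block, show that $H^{\phi}$ is a multiple of $g$, feed $\mathcal{L}_{\nabla\phi}g=2H^{\phi}$ into the soliton equation, and read off that $Ric$ is an explicit scalar times $g$. The detour through the warped-product Ricci formulas ($Ric^{B}=0$, $Ric(\partial_t,\partial_t)=-n\ddot f/f$, the $\tilde f$ term) is not needed for either conclusion and the paper does not invoke it at this stage; once $H^{\phi}=\dot f\,g$ is established, $Ric=(\lambda-\dot f-\tfrac{p}{2}-\tfrac{1}{n})g$ follows in one line and both parts are immediate.

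The step that fails is precisely the sign bookkeeping you flag as the main obstacle. You correctly take $\nabla\phi=-f\,\partial_t$ (since $g^{tt}=-1$), but the Hessian values you then report are mutually inconsistent: with $\nabla\phi=-f\,\partial_t$ and $\nabla_U\partial_t=(\dot f/f)U$ one gets $\nabla_U\nabla\phi=-\dot f\,U$, hence $H^{\phi}(U,V)=-\dot f\,g(U,V)$, not $+\dot f\,g(U,V)$; and $H^{\phi}(\partial_t,\partial_t)=\dot f$ equals $-\dot f\,g(\partial_t,\partial_t)$, so with consistent signs $H^{\phi}=-\dot f\,g$ on every block. With the values exactly as you wrote them, $H^{\phi}$ is not proportional to $g$ at all, and the claimed cancellation under $\mu=2\dot f$ breaks in the $(\partial_t,\partial_t)$ slot: $2\dot f+2Ric(\partial_t,\partial_t)=\mu\,g(\partial_t,\partial_t)=-2\dot f$ forces $Ric(\partial_t,\partial_t)=-2\dot f\neq 0$ in general. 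Carrying the Lorentzian sign through consistently gives $Ric=(\lambda+\dot f-\tfrac{p}{2}-\tfrac{1}{n})g$, i.e.\ Ricci flatness under $\lambda=-\dot f+\tfrac{p}{2}+\tfrac{1}{n}$; the paper reaches the relation with $+\dot f$ only because it sets $\xi=\mathrm{grad}\,\phi=+f\,\partial_t$ (dropping the $-1$ from $g^{tt}$) and computes $H^{\phi}=+\dot f\,g$ throughout. So either adopt that convention and recover the theorem as stated, or keep your Lorentzian gradient and accept a sign change in part (1) — the hybrid in your write-up proves neither. Part (2) is insensitive to this issue, since the coefficient of $g$ is constant exactly when $\dot f$ is, so that half of your argument stands.
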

\begin{proof}
  As per our assumption, $(M,g,\mu,\phi)$ being a conformal gradient Ricci soliton, setting $\xi=grad\phi$, from equation $(1.1)$ we can write
  \begin{equation}
(\mathcal{L}_{\xi}g)(X,Y)+2Ric(X,Y)=\mu g(X,Y)=[2\lambda -(p+\frac{2}{n})]g(X,Y),
\end{equation}
for all $X,Y\in\mathfrak{X}(M)$.\\
  Again since  $\phi=\int_{c}^{t}f(z)dz$, then $\xi=grad\phi$ implies that $\xi=f(t)\frac{\partial}{\partial t}$ and it can be seen that the vector field $\xi$ is orthogonal to the manifold $F$.\\
 Let us assume that $\{\frac{\partial}{\partial t}, \frac{\partial}{\partial x_1}, \frac{\partial}{\partial x_2},...., \frac{\partial}{\partial x_n}\}$ be an orthonormal basis of $\mathfrak{X}(M)$. Then the Hessian of the function $\phi$ is given by
 \begin{equation}
   H^{\phi}(X,Y)=g(\nabla_Xgrad\phi,Y).
 \end{equation}
 Now, we consider the following three cases.\par
 \medskip
 \textbf{Case 1:} First let us consider $X=Y=\frac{\partial}{\partial t}$.\\
 Then from equation $(5.3)$ we get
 \begin{eqnarray}
 % \nonumber % Remove numbering (before each equation)
   H^{\phi}(\frac{\partial}{\partial t},\frac{\partial}{\partial t}) &=& g(\nabla_{\frac{\partial}{\partial t}}grad\phi,\frac{\partial}{\partial t}) \nonumber\\
    &=& \dot{f}g(\frac{\partial}{\partial t},\frac{\partial}{\partial t}).
 \end{eqnarray}
 \par
 \medskip
 \textbf{Case 2:} Next we consider $X=\frac{\partial}{\partial t}$ and $Y=\frac{\partial}{\partial x_i}$ for $i=1,2,....,n$.\\
 Then in this case equation $(5.3)$ implies
 \begin{eqnarray}
 % \nonumber % Remove numbering (before each equation)
   H^{\phi}(\frac{\partial}{\partial t},\frac{\partial}{\partial x_i}) &=& g(\nabla_{\frac{\partial}{\partial t}}grad\phi,\frac{\partial}{\partial x_i}) \nonumber\\
    &=& \dot{f}g(\frac{\partial}{\partial t},\frac{\partial}{\partial x_i}).
 \end{eqnarray}
 \par
 \medskip
 \textbf{Case 3:} Finally we consider $X=\frac{\partial}{\partial x_i}$ and $Y=\frac{\partial}{\partial x_j}$ for $1\leq i,j\leq n$.\\
 Then from equation $(5.3)$ we have
 \begin{eqnarray}
 % \nonumber % Remove numbering (before each equation)
   H^{\phi}(\frac{\partial}{\partial x_i},\frac{\partial}{\partial x_j}) &=& g(\nabla_{\frac{\partial}{\partial x_i}}grad\phi,\frac{\partial}{\partial x_j}) \nonumber\\
    &=& fg(\nabla_{\frac{\partial}{\partial x_i}}\frac{\partial}{\partial t},\frac{\partial}{\partial x_j}) \nonumber\\
    &=& fg(\frac{\dot{f}}{f}\frac{\partial}{\partial x_i},\frac{\partial}{\partial x_j}) \nonumber\\
    &=& \dot{f}g(\frac{\partial}{\partial x_i},\frac{\partial}{\partial x_j}).
 \end{eqnarray}
 \par
 \medskip
 Therefore combining equations $(5.4)$, $(5.5)$ and $(5.6)$ and using it in $(5.3)$ we get
 \begin{equation}
   H^{\phi}(X,Y)=\dot{f}g(X,Y).
 \end{equation}
 Now, since $\xi=grad\phi$, using the definition of Lie differentiation we can write
 \begin{eqnarray}
 % \nonumber % Remove numbering (before each equation)
    (\mathcal{L}_{\xi}g)(X,Y) &=& g(\nabla_Xgrad\phi,Y)+g(X,\nabla_Ygrad\phi)\nonumber\\
                              &=& 2H^{\phi}(X,Y).\nonumber
 \end{eqnarray}
Thus using equation $(5.7)$, the above equation becomes
 \begin{equation}
   (\mathcal{L}_{\xi}g)(X,Y)=2\dot{f}g(X,Y).
 \end{equation}
 Using the value of equation $(5.8)$ in the equation $(5.2)$ and after simplification we get
 \begin{equation}
 Ric(X,Y)=[\lambda -\dot{f}-\frac{p}{2}-\frac{1}{n}]g(X,Y).
 \end{equation}
 Thus if $\lambda=\dot{f}+\frac{p}{2}+\frac{1}{n}$, from equation $(5.9)$, it implies that $(M,g)$ is Ricci flat. This completes the first part of the theorem.\par
 \medskip
 Again if $\dot{f}$ is a constant, say $\dot{f}=a$, i.e; if $df=adt$, i.e; if $f=at+b$ for some arbitrary constant $b$, then from equation $(5.9)$ we can conclude that $(M,g)$ is Einstein. This completes the proof.
\end{proof}
\par
\medskip
\medskip
 
\end{document}